\newtheorem{theorem}{Theorem}
\theoremstyle{definition}
\newtheorem{definition}{Definition}[section]
\theoremstyle{remark}
\newcounter{wordcnt}
\newcommand{\setwordlist}[1]{%
  \setcounter{wordcnt}{0}
  \renewcommand{\do}[1]{
    \stepcounter{wordcnt}
    \expandafter\def\csname wordlist\thewordcnt\endcsname{##1}
  }%
  \docsvlist{#1}
}
\newcommand{\provided}[1]{%
  \if\relax\detokenize{#1}\relax
    \csname wordlist\fpeval{randint(1,\value{wordcnt})}\endcsname
  \else
    \csname wordlist#1\endcsname
  \fi
}
\begin{document}
\title{RIEMANNIAN METRIC BUNDLE}
\author{Shouvik Datta Choudhury\thanks{shouvikdc8645@gmail.com, shouvik@capsulelabs.in}\\
  \small Gapcrud Private Limited (Capsule Labs)\\
  \small HA 130, Saltlake, Sector III, Bidhannagar,\\
  \small Kolkata - 700097, India}
\date{\today}
\maketitle
\begin{abstract}
    A Riemannian metric bundle G(M) is a fiber bundle over a smooth manifold M, whose fibers are the spaces of symmetric, positive-definite bilinear forms on the tangent spaces of M, which represent the Riemannian metrics. In this work, we aim to study the category of Riemannian metric bundles and explore their connections with K-theory and other areas of mathematics. Our main motivation comes from the idea of multi-norms in Banach spaces, which have found applications in diverse fields such as functional analysis, geometric group theory, and noncommutative geometry. The novelty of our work lies in the rigorous development of the theory of Riemannian metric bundles, and the application of this theory to the study of K-theory and other geometric invariants of manifolds. We hope that our work will contribute to a deeper understanding of the geometry and topology of manifolds equipped with Riemannian metric bundles, and provide new insights into the interplay between geometry, topology, and analysis.
\end{abstract}
\textbf{MSC 2010 Classification:} 19-XX, 19Lxx, 19L50, 53-XX, 53C20 \\
\textbf{Keywords:} K-theory, fiber bundles, Riemannian metric bundle, multi-normed spaces
\section{Motivation}
    The concept of multinorms in Banach spaces has been studied extensively, and has led to the development of new mathematical tools and techniques for understanding the geometry and topology of these spaces. Motivated by this idea, we seek to explore the concept of multinorms in the context of Riemannian metric bundles on smooth manifolds. This is an important and challenging area of research, with potential applications in a variety of fields, including mathematical physics, differential geometry, and topology. By studying the interplay between different geometric structures on Riemannian metric bundles, we hope to gain a deeper understanding of the underlying mathematical structures and their properties, and to develop new techniques for solving problems in these fields. In this paper, we present a rigorous definition of Riemannian metric bundles, and explore their properties and relationships to other geometric structures. We then develop a K-theory for these bundles, and use this theory to prove important results such as a new index theorem and a new periodicity. Finally, we discuss some potential applications of our work and future directions for research in this area.
\part{Introduction}    
\section{K Theory[1]}
K theory is a branch of algebraic topology that was introduced by Alexander Grothendieck in the 1950s as a tool for studying vector bundles on topological spaces. It has since grown into a major area of research, with connections to a wide range of mathematical fields including algebraic geometry, operator algebras, and mathematical physics.
The basic idea behind K theory is to assign algebraic invariants, called K-groups, to various classes of vector bundles on a topological space. These invariants capture important geometric and topological information about the underlying space and its vector bundles, and can be used to study a wide range of mathematical problems.
K theory has numerous applications in mathematics and physics, ranging from the study of elliptic operators and index theory, to the classification of topological phases of matter in condensed matter physics. It continues to be an active area of research, with many exciting developments and new connections to other fields still being discovered.
\section{Multi normed spaces[8]}
A multi-norm space is a mathematical space equipped with several norms, used to measure the size or "length" of elements in the space. It is a generalization of Banach spaces that have a single norm. Let $X$ be a vector space over the field of real or complex numbers, and let $\vert\vert.\vert\vert_{ii}\in I$ be a family of norms on $X$ indexed by a set $I$. The pair $(X, {\vert\vert.\vert\vert_{ii}\in I})$ is called a multi-norm space if each $\vert\vert.\vert\vert_{i}$ is a norm on $X$, and the topology induced by these norms is the topology of the space.
Rigorously,
\begin{definition}
A multi-normed space is a tuple $(V, N)$, where $V$ is a vector space over a field $\mathbb{F}$ (usually $\mathbb{R}$ or $\mathbb{C}$), and $N = {|\cdot|i}{i=1}^{n}$ is a collection of $n$ norms $|\cdot|_i : V \rightarrow [0, \infty)$, for $1 \leq i \leq n$, each satisfying the following properties:
Positivity: For every $v \in V$ and $1 \leq i \leq n$, $|v|_i \geq 0$. Furthermore, $|v|_i = 0$ if and only if $v = 0$.
Homogeneity: For every $v \in V$, $c \in \mathbb{F}$, and $1 \leq i \leq n$, $|c v|_i = |c| |v|_i$.
Triangle inequality: For every $u, v \in V$ and $1 \leq i \leq n$, $|u + v|_i \leq |u|_i + |v|_i$.
\end{definition}
Multi-normed spaces can be useful in various applications, particularly in functional analysis and approximation theory, where different norms may capture distinct aspects of the behavior of elements in the space. For instance, multi-normed spaces can be used to study the convergence of sequences or series in different norms, approximations in one norm that are also controlled in another norm, or the interplay between different regularity or smoothness properties of functions in function spaces.

A common example of multi-normed spaces is the Sobolev spaces equipped with multiple norms corresponding to different degrees of smoothness. Another example is the sequence spaces, where each norm represents different summability properties or convergence rates of the sequences.
Multi-norm spaces are widely used in various areas of mathematics, such as functional analysis, approximation theory, and the study of partial differential equations (PDEs). For instance, in the study of PDEs, Sobolev spaces equipped with multiple norms are common, with each norm corresponding to a different degree of smoothness or decay at infinity.

The study of multi-norm spaces involves understanding the interplay between the different norms, the structure of the space, and its applications. Some of the main topics of interest include completeness and convergence, duality and compactness, embeddings and interpolation, and applications. A multi-norm space is said to be complete with respect to a norm $‖·‖_i$ if every Cauchy sequence with respect to that norm converges to an element in the space. Understanding the completeness properties of a multi-norm space can be important for studying the existence and uniqueness of solutions to various mathematical problems. In the context of multi-norm spaces, it is often necessary to consider the dual space, which consists of continuous linear functionals on the space, and compactness properties, such as the Arzelà-Ascoli theorem or the Banach-Alaoglu theorem, in relation to the different norms. Multi-norm spaces often arise from considering embeddings of one function space into another or from studying interpolation problems between different spaces. Understanding the embedding and interpolation properties of multi-norm spaces is crucial for applications in PDEs, approximation theory, and numerical analysis. Multi-norm spaces have various applications in different areas of mathematics, such as PDEs, harmonic analysis, approximation theory, and numerical analysis. Studying the properties of multi-norm spaces can lead to new insights and results in these fields.
\section{Fredholm operators[1]}
A Fredholm operator is a linear operator between Banach spaces that is "nearly invertible". More precisely, a bounded linear operator T: X → Y is Fredholm if the following conditions hold:

The kernel of T, denoted ker(T), is finite dimensional.
The image of T, denoted im(T), is closed in Y.
The cokernel of T, denoted coker(T), which is the quotient space Y/im(T), is finite dimensional.
Intuitively, a Fredholm operator is a linear operator that is "almost" invertible, in the sense that it has a finite dimensional null space and its range is "almost" the entire target space. The cokernel measures how much "extra" space there is in the target space beyond the range of T.

Fredholm operators arise in many areas of mathematics, including functional analysis, differential equations, and algebraic geometry. They are closely related to the concept of elliptic operators, which are differential operators that satisfy certain regularity and ellipticity conditions.

One of the key properties of Fredholm operators is that their index is well-defined. The index of a Fredholm operator T, denoted ind(T), is defined as the difference between the dimension of the kernel and the dimension of the cokernel:

ind(T) = dim(ker(T)) - dim(coker(T)).

The index of a Fredholm operator is an integer that measures the failure of T to be invertible. If ind(T) = 0, then T is invertible "up to compact operators", meaning that T can be inverted on a dense subspace of the domain of T, but the inverse may not be a bounded operator.

The theory of Fredholm operators provides powerful tools for studying the solvability of linear equations and systems of equations. In particular, the index of a Fredholm operator is closely related to the solvability of linear equations of the form Tx = y, where $y \in (T)$. If ind(T)$\neq$ 0, then the equation Tx = y has no solution for some y, while if ind(T) = 0, then the equation has a unique solution for all $y\in(T)$ "up to compact errors".
\section{Todd class[1]}
The Todd class is a fundamental class in algebraic topology that plays an important role in the study of complex manifolds. It is a cohomology class associated to a complex vector bundle, and it measures the failure of the bundle to be trivial. The Todd class can be defined in several equivalent ways, such as using the Chern classes of the bundle or using the exponential map.

One of the key properties of the Todd class is that it behaves well under certain operations, such as pullback and Whitney sum. This makes it a useful tool in various areas of mathematics, such as algebraic geometry, complex analysis, and differential geometry.

In algebraic geometry, the Todd class can be used to compute intersection numbers of algebraic varieties, which are important in enumerative geometry and mirror symmetry. In complex analysis, the Todd class is related to the Hodge decomposition and the Riemann-Roch theorem, which are central results in the study of compact complex manifolds. In differential geometry, the Todd class appears in the index theorem for elliptic operators and in the Gauss-Bonnet-Chern theorem, which relate the topology of a manifold to geometric quantities such as curvature.

The study of the Todd class involves understanding its properties, such as its behavior under different operations and its relationship to other cohomology classes. 
\section{Chern character[1]}
The Chern character is a fundamental concept in algebraic topology and differential geometry that assigns a sequence of cohomology classes to a complex vector bundle over a topological space. The Chern character is used to study the topology of complex vector bundles and is an important tool in the study of algebraic curves and surfaces, as well as in the theory of elliptic operators.

Given a complex vector bundle E over a space X, the Chern character is defined as a sequence of cohomology classes
$$
ch(E) = [ch_0(E), ch_1(E), ch_2(E), ...]
$$
where $ch_k(E)$ is a cohomology class in $H^{2k}(X; \mathbb{Q})$ that depends on the curvature of the connection on E. The first few terms of the Chern character are given by:
$$
ch_0(E) = rank(E)
ch_1(E) = -\frac{1}{2\pi i} tr(F_E)
ch_2(E) = \frac{1}{(2\pi i)^2} (tr(F_E^2) - tr(F_E)^2/2)
$$
where rank(E) is the rank of the vector bundle E,$F_E$ is the curvature of a connection on $E$, and tr denotes the trace of a linear operator.

The Chern character has several important properties, including additivity and multiplicativity with respect to direct sums and tensor products of vector bundles, respectively. In particular, the Chern character of the tensor product of two vector bundles is given by the product of their Chern characters.

The Chern character is used to define the Chern classes, which are the topological invariants of a complex vector bundle. The n-th Chern class of a vector bundle E is defined as the image of $ch_n(E)$ under the natural map from $H^{2n}(X; \mathbb{Q}) to H^{2n}(X; \mathbb{Z})$. The Chern classes are independent of the choice of connection on E and satisfy several important properties, such as the Whitney product formula and the Hirzebruch-Riemann-Roch theorem.
\part{Main Work}    
\section{Manifolds with "multinorms"}
Let $M$ be a smooth manifold. We define the concept of multi-norm on $M$ by considering different geometric structures on $M$ that induce different norms or distances. Specifically, we define multiple structures for Riemannian, Finsler, and sub-Riemannian manifolds.

\textbf{Manifold with multiple Riemannian metrics:}

Let ${g_i}_{i\in I}$ be a family of Riemannian metrics on $M$, indexed by a set $I$. Each $g_i$ is a smooth, symmetric, positive-definite bilinear form on each tangent space $T_xM$ of the manifold $M$. For a tangent vector $v\in T_xM$, the norm induced by the metric $g_i$ is defined as:
$$
\|v\|_i=\sqrt{g_i(v, v)}
$$
Each Riemannian metric $g_i$ induces a distance function $d_i$ on the manifold, defined as the infimum of the lengths of the curves joining two points, where the length of a curve is computed using the metric $g_i$.

\textbf{Manifold with multiple Finslerian metrics:}

Let ${F_i}_{i\in I}$ be a family of Finsler metrics on $M$, indexed by a set $I$. Each $F_i$ is a function that assigns a norm to each tangent space $T_xM$ of the manifold $M$, satisfying certain conditions, such as smoothness, positive definiteness, and strong convexity. For a tangent vector $v\in T_xM$, the norm induced by the Finsler metric $F_i$ is defined as:
$$
\|v\|_i=F_i(v)
$$
Each Finsler metric $F_i$ induces a distance function $d_i$ on the manifold, defined as the infimum of the lengths of the curves joining two points, where the length of a curve is computed using the Finsler metric $F_i$.

\textbf{Sub-Riemannian manifold with multiple structures:}

Let ${\Delta_i}_{i\in I}$ be a family of sub-Riemannian structures on $M$, indexed by a set $I$. Each $\Delta_i$ is a pair $(D_i,h_i)$, where $D_i$ is a smooth distribution (a smoothly varying family of vector subspaces of the tangent spaces $T_xM$) and $h_i$ is a smoothly varying inner product on the distribution $D_i$. Each sub-Riemannian structure $\Delta_i$ induces a distance function $d_i$ on the manifold, known as the Carnot-Carathéodory distance, which is defined as the infimum of the lengths of the absolutely continuous curves that are tangent to the distribution $D_i$ and join two points, where the length of a curve is computed using the inner product $h_i$.

In each of these cases, the manifold $M$ can be equipped with multiple structures (Riemannian, Finsler, or sub-Riemannian) that induce different norms or distances on the manifold, leading to a "multi-norm" concept. This can be expressed mathematically as:
$$
\left(M,\left\{\|\cdot\|_i\right\}_{i \in I},\left\{d_i\right\}_{i \in I}\right)
$$
where $M$ is the smooth manifold, ${|\cdot|i}{i\in I}$ is the family of norms induced by the different geometric structures, and ${d_i}_(i\in I)$ is the family of distance functions associated with these structures.
In this section, we define the Riemannian metric bundle G(M) and its components.

\subsection{Riemannian metric bundle}

Let M be a smooth manifold of dimension n. The Riemannian metric bundle G(M) is a fiber bundle over M with the following components:

\begin{itemize}
\item Base space (B): The base space of the bundle G(M) is the smooth manifold M itself.
\item Fibers (F\textsubscript{x}): For each point $x \in M$, the fiber G\textsubscript{x}(M) over x is the space of symmetric, positive-definite bilinear forms on the tangent space T\textsubscript{x}M, which represent the Riemannian metrics at the point x.
\item Total space (E): The total space of the bundle G(M) is the disjoint union of the fibers G\textsubscript{x}(M) over all points $x\in M$, given by G(M) = $\sqcup$\textsubscript{$x\in M$} G\textsubscript{x}(M).
\item Projection map ($\pi$): The projection map $\pi : G(M)\to M$ assigns to each Riemannian metric g\textsubscript{x}$\in$ G\textsubscript{x}(M) the base point $x\in M$, such that $\pi$(g\textsubscript{x}) = x.
\end{itemize}
A Riemannian manifold is a smooth manifold equipped with a Riemannian metric, which is a smoothly varying inner product on the tangent spaces at each point of the manifold. This metric allows us to measure lengths, angles, and areas on the manifold, providing the framework to study its geometric properties.

On the other hand, a Riemannian metric bundle is a fiber bundle whose fibers consist of symmetric, positive-definite bilinear forms on the tangent spaces of the manifold. In other words, the fibers represent the Riemannian metrics themselves. The Riemannian metric bundle can be seen as a collection of all possible Riemannian metrics on the manifold, smoothly varying along the base manifold.

The connection between the two concepts is that given a Riemannian manifold, we can construct its associated Riemannian metric bundle by considering the collection of all Riemannian metrics on the manifold. Conversely, given a Riemannian metric bundle, we can equip the base manifold with a specific Riemannian metric by selecting a section of the bundle. This process turns the base manifold into a Riemannian manifold.

In summary, the resemblance between a Riemannian metric bundle and a Riemannian manifold lies in the interplay between the manifold and the Riemannian metrics that can be defined on it. A Riemannian metric bundle encompasses all possible Riemannian metrics for a given manifold, while a Riemannian manifold is the result of equipping the base manifold with a specific choice of Riemannian metric.
The Riemannian metric bundle G(M) provides a framework for studying the space of Riemannian metrics on the manifold M and how they vary across the manifold. Each point $x\in M$ is associated with a fiber G\textsubscript{x}(M), which contains all possible Riemannian metrics at that point. The projection map $\pi$ assigns each metric to its corresponding point in M, allowing us to study how the Riemannian metrics vary across the manifold.
The study of Riemannian metric bundles can be an important area of research in differential geometry, which has possible connections to mathematical physics. A Riemannian metric bundle can be defined as a fiber bundle over a smooth manifold $M$, where each fiber $G_x(M)$ over a point $x$ in $M$ is the space of symmetric, positive-definite bilinear forms on the tangent space $T_xM$, which represent the Riemannian metrics at $x$. The total space $E$ of the bundle is the disjoint union of the fibers $G_x(M)$ over all points $x$ in $M$. The projection map $\pi : G(M) \rightarrow M$ assigns to each Riemannian metric $g_x \in G_x(M)$ the base point $x \in M$, such that $\pi(g_x) = x$.

The Riemannian metric bundle $G(M)$ provides a natural framework for studying the space of Riemannian metrics on the manifold $M$ and their variations across the manifold. This has important implications in mathematical physics, where Riemannian metrics are used to model the behavior of physical systems. By studying the properties of Riemannian metric bundles, researchers can gain insights into the geometric and topological properties of the underlying manifold and develop more sophisticated models and theories in mathematical physics. Our work is motivated by the idea of multinorms in Banach space and aims to provide a novel approach to studying Riemannian metric bundles and their properties.\\
In the context of the Riemannian metric bundle $G(M)$ over a smooth manifold $M$, a subbundle is a fiber bundle $H(M)$ that satisfies the following properties:

\begin{itemize}
\item \textbf{Base space ($B'$)}: The base space of the subbundle $H(M)$ is the same smooth manifold $M$ as the base space of $G(M)$.
\item \textbf{Fibers ($F'x$)}: For each point $x \in M$, the fiber $H_x(M)$ over $x$ is a subspace of the fiber $G_x(M)$, which means that $H_x(M) \subseteq G_x(M)$. Each fiber $H_x(M)$ consists of a subset of symmetric, positive-definite bilinear forms on the tangent space $T_xM$, which represent a restricted set of Riemannian metrics at the point $x$.
\item \textbf{Total space ($E'$)}: The total space of the subbundle $H(M)$ is the disjoint union of the fibers $H_x(M)$ over all points $x \in M$, given by $H(M) = \bigcup\limits{x\in M} H_x(M)$. As each fiber $H_x(M)$ is a subspace of $G_x(M)$, the total space of $H(M)$ is a subspace of the total space of $G(M)$, i.e., $H(M) \subseteq G(M)$.
\item \textbf{Projection map ($\pi'$)}: The projection map $\pi' : H(M) \rightarrow M$ assigns to each Riemannian metric $h_x \in H_x(M)$ the base point $x \in M$, such that $\pi'(h_x) = x$. Since $H(M)$ is a subspace of $G(M)$, the projection map $\pi'$ of $H(M)$ is the restriction of the projection map $\pi$ of $G(M)$ to the total space of $H(M)$, i.e., $\pi'(h_x) = \pi(h_x)$ for all $h_x \in H(M)$.
\end{itemize}

A subbundle of the Riemannian metric bundle $G(M)$ is a fiber bundle over the same base space $M$, with fibers consisting of subsets of Riemannian metrics at each point. This allows for the study of a restricted set of Riemannian metrics on the manifold $M$ and their variation across the manifold.
Multi-norm spaces, also known as multi-Banach spaces or multi-normed spaces, are mathematical spaces that are equipped with several norms, which are used to measure the size or "length" of elements in the space. Multi-norm spaces generalize the concept of Banach spaces, which have a single norm. They arise naturally in various areas of mathematics, such as functional analysis, approximation theory, and the study of partial differential equations (PDEs). \\
\textbf{Definition: Section of the Riemannian Metric Bundle $G(M)$}

A \textit{section} of the Riemannian metric bundle $G(M)$ over a smooth manifold $M$ is a continuous map $\sigma: M \rightarrow G(M)$ such that the projection map $\pi$ is the identity on the image of $\sigma$, i.e., $\pi(\sigma(x)) = x$ for all $x \in M$.

In the context of the Riemannian metric bundle $G(M)$, a section assigns to each point $x \in M$ a Riemannian metric $g_x \in G_x(M)$, where $G_x(M)$ is the fiber over $x$ consisting of all symmetric, positive-definite bilinear forms on the tangent space $T_xM$. The section $\sigma$ can be seen as a global choice of Riemannian metric for the manifold $M$, as it provides a Riemannian metric at every point of $M$ in a continuous manner.

Note that the term \textit{cross-section} is sometimes used interchangeably with \textit{section} in the context of fiber bundles.

A multi-norm space can be defined as follows:

Let $X$ be a vector space over the field of real or complex numbers.
Let ${| \cdot |_{ii}\in I}$ be a family of norms on $X$, indexed by a set $I$.
The pair $(X, {| \cdot |_{i,i}\in I})$ is called a multi-norm space if each $| \cdot |_i$ is a norm on $X$, and the topology induced by these norms is the topology of the space.

In a multi-norm space, the norms are typically chosen to reflect different aspects of the elements in the space or to capture different types of regularity or decay properties. For example, in the study of PDEs, it is common to consider Sobolev spaces equipped with multiple norms, each corresponding to a different degree of smoothness or decay at infinity.

The study of multi-norm spaces involves understanding the interplay between the different norms, the structure of the space, and its applications. Some of the main topics of interest include:

Completeness and convergence: A multi-norm space is said to be complete with respect to a norm $| \cdot |_i$ if every Cauchy sequence with respect to that norm converges to an element in the space. In general, a multi-norm space might be complete with respect to some norms and not others. Understanding the completeness properties of a multi-norm space can be important for studying the existence and uniqueness of solutions to various mathematical problems.

Duality and compactness: In the context of multi-norm spaces, it is often necessary to consider the dual space, which consists of continuous linear functionals on the space, and compactness properties, such as the Arzelà-Ascoli theorem or the Banach-Alaoglu theorem, in relation to the different norms.

Embeddings and interpolation: Multi-norm spaces often arise from considering embeddings of one function space into another, or from studying interpolation problems between different spaces. Understanding the embedding and interpolation properties of multi-norm spaces is crucial for applications in PDEs, approximation theory, and numerical analysis.

Applications: Multi-norm spaces have various applications in different areas of mathematics, such as PDEs, harmonic analysis, approximation theory, and numerical analysis. Studying the properties of multi-norm spaces can lead to new insights and results in these fields.
\section{Relation between multi-normed manifold and Riemannian metric bundle}
 let $M$ be a smooth manifold and let ${g_i}_{i\in I}$ be a family of Riemannian metrics on $M$. Each $g_i$ is a smooth, symmetric, positive-definite bilinear form on each tangent space $T_xM$ of the manifold $M$. For a tangent vector $v\in T_xM$, the norm induced by the metric $g_i$ is defined as $|v|_i=\sqrt{g_i(v,v)}$. Each Riemannian metric $g_i$ induces a distance function $d_i$ on the manifold $M$, defined as the infimum of the lengths of the curves joining two points, where the length of a curve is computed using the metric $g_i$.

The concept of a manifold equipped with multiple Riemannian metrics is related to the notion of a Riemannian metric bundle as a section. A Riemannian metric bundle $G(M)$ over a smooth manifold $M$ is a fiber bundle over $M$ with fibers consisting of symmetric, positive-definite bilinear forms on the tangent space $T_xM$, which represent the Riemannian metrics at each point $x\in M$. A section of the Riemannian metric bundle is a continuous map $\sigma:M\to G(M)$ such that the projection map $\pi$ is the identity on the image of $\sigma$, i.e., $\pi(\sigma(x))=x$ for all $x\in M$.

Note that a manifold equipped with multiple Riemannian metrics can be seen as a section of the Riemannian metric bundle $G(M)$ over $M$, where each point $x\in M$ is associated with the Riemannian metric $g_x$ induced by the family of Riemannian metrics ${g_i}_{i\in I}$ at that point. Thus, the manifold equipped with multiple Riemannian metrics can be identified with the image of the section $\sigma$ in the Riemannian metric bundle $G(M)$. 
\section{Differences between Fiber bundles and Riemannian metric bundles}
Fiber bundles are a fundamental concept in topology and geometry that describe the global behavior of local objects. In general, a fiber bundle consists of a space called the total space, a base space, and a projection map that assigns to each point in the total space a point in the base space. The fibers of the bundle are then the sets of points in the total space that project to a single point in the base space.

Riemannian metric bundles are a specific type of fiber bundle that is equipped with an additional geometric structure, namely a Riemannian metric. This metric structure is defined on each fiber of the bundle, which consists of the space of symmetric, positive-definite bilinear forms on the tangent space at each point of the base space. The Riemannian metric is used to measure the lengths of tangent vectors and the angles between them, and it provides a way to define the curvature and other geometric properties of the manifold.

One of the key differences between general fiber bundles and Riemannian metric bundles is the additional geometric structure that is present in the latter. While general fiber bundles can be equipped with a variety of different structures, such as vector bundles or principal bundles, the presence of a Riemannian metric on the fibers of a bundle imposes additional constraints on the geometry of the manifold. For example, the metric structure on the fibers determines the curvature and volume of the manifold, and it can be used to define geometric invariants such as the Euler characteristic or the Pontryagin classes.

Another difference between general fiber bundles and Riemannian metric bundles is the type of transformations that can be applied to them. In general, a fiber bundle can be transformed by a diffeomorphism, which is a smooth, bijective map that preserves the structure of the bundle. However, in the case of a Riemannian metric bundle, the diffeomorphisms that preserve the metric structure are more restrictive, since they must also preserve the metric on each fiber of the bundle. This leads to a rich interplay between the geometry and topology of the manifold, and the algebraic structure of the bundle.

In summary, Riemannian metric bundles are a special type of fiber bundle that are equipped with an additional geometric structure, namely a Riemannian metric. This structure imposes additional constraints on the geometry of the manifold, and it leads to a rich interplay between the geometry and topology of the manifold, and the algebraic structure of the bundle.
\section{Manifolds equipped with Riemannian metric bundle are Riemannian manifolds-A Deduction}
Let $M$ be a smooth manifold and $G(M)$ be a Riemannian metric bundle over $M$. Our goal is to show that the manifold $M$ equipped with a Riemannian metric from the bundle $G(M)$ is a Riemannian manifold.

Recall that a Riemannian manifold is a smooth manifold $M$ equipped with a Riemannian metric $g$, which is a smoothly varying family of inner products on the tangent spaces of $M$. The Riemannian metric satisfies the following properties:

$g_p$ is a symmetric bilinear form on $T_pM$ for each $p \in M$.
$g_p$ is positive-definite for each $p \in M$.
The assignment $p \mapsto g_p$ is smooth.
Now, consider a section $\sigma: M \to G(M)$ of the Riemannian metric bundle $G(M)$. By definition, each fiber $G_p(M)$ of $G(M)$ consists of symmetric, positive-definite bilinear forms on the tangent space $T_pM$. Therefore, for each point $p \in M$, $\sigma(p) \in G_p(M)$ is a symmetric, positive-definite bilinear form on $T_pM$. We can denote this bilinear form as $g_p$.

Since $\sigma$ is a smooth section, the assignment $p \mapsto g_p$ is smooth. This means that the Riemannian metric $g$ varies smoothly across the manifold $M$. Consequently, we have a smoothly varying family of symmetric, positive-definite bilinear forms $g_p$ on the tangent spaces of $M$.

Thus, the smooth manifold $M$ equipped with the Riemannian metric $g$ from the Riemannian metric bundle $G(M)$ satisfies all the properties of a Riemannian manifold. Therefore, we can conclude that a manifold having a Riemannian metric bundle is a Riemannian manifold.
\section{Riemannian metric bundles are multinormed-A Deduction}
A Riemannian metric bundle $G(M)$ is a fiber bundle over a smooth manifold $M$, where the fibers consist of symmetric, positive-definite bilinear forms on the tangent spaces of $M$. These bilinear forms represent the Riemannian metrics at each point of the manifold.

To show that the Riemannian metric bundle is multinormed, we must demonstrate that there exists a family of norms on the tangent spaces of $M$ indexed by the points of $M$. These norms must satisfy the following conditions:

For each $p \in M$, $\lVert v \rVert_p \ge 0$ for all $v \in T_pM$, and $\lVert v \rVert_p = 0$ if and only if $v = 0$.
For each $p \in M$, $\lVert \alpha v \rVert_p = |\alpha| \lVert v \rVert_p$ for all $\alpha \in \mathbb{R}$ and $v \in T_pM$.
For each $p \in M$, $\lVert v + w \rVert_p \le \lVert v \rVert_p + \lVert w \rVert_p$ for all $v, w \in T_pM$.
Let $\sigma: M \to G(M)$ be a smooth section of the Riemannian metric bundle $G(M)$. For each point $p \in M$, $\sigma(p)$ is a symmetric, positive-definite bilinear form on $T_pM$. We denote this bilinear form as $g_p$. Using the Riemannian metric $g_p$, we can define a norm on the tangent space $T_pM$ as follows:

Now, we verify that this definition satisfies the properties of a norm:

Non-negativity and definiteness: Since $g_p$ is positive-definite, $g_p(v, v) \ge 0$ for all $v \in T_pM$, and $g_p(v, v) = 0$ if and only if $v = 0$. Thus, $\lVert v \rVert_p \ge 0$ for all $v \in T_pM$, and $\lVert v \rVert_p = 0$ if and only if $v = 0$.

Absolute scalability: Let $\alpha \in \mathbb{R}$ and $v \in T_pM$. We have:
\section{An index theorem and its proof}
Consider a compact Riemannian manifold $M$ of dimension $n$, with a Riemannian metric bundle $E$. Let $E_1, E_2, \ldots, E_k$ be the associated vector bundles. Associated vector bundles are vector bundles that are constructed from a principal bundle and a linear representation of the structure group. The concept of associated vector bundles is used to transfer information between the principal bundle and other related vector bundles.
To define an associated vector bundle, let $P$ be a principal $G$-bundle over a base space $M$, where $G$ is a Lie group that acts on a vector space $V$ via a linear representation $\rho: G \to \operatorname{GL}(V)$. Using the action of $G$ on $V$, we can construct an associated vector bundle $E$ over the same base space $M$.
$$
E=(P \times V) / G,
$$
where $(p, v) \sim (pg, \rho(g^{-1})v)$ for all $p \in P$, $v \in V$, and $g \in G$.

The projection map $\pi_E : E \to M$ is defined as $\pi_E([(p, v)]) = \pi_P(p)$, where $[(p, v)]$ is the equivalence class of $(p, v)$ in the quotient space, and $\pi_P: P \to M$ is the projection map of the principal bundle.

Sections of the associated vector bundle $E$ correspond to $G$-equivariant maps from $P$ to $V$. In particular, the associated vector bundle provides a way to study the geometry and topology of the principal bundle through the lens of vector bundle theory. Associated vector bundles play an essential role in gauge theory, the study of connections, and the Atiyah-Singer Index Theorem.
The total space of the associated vector bundle $E$ is given by the quotient space of the Cartesian product $P \times V$ by the diagonal action of $G$:
Given a Riemannian metric bundle $G(M)$ over a smooth manifold $M$, the fibers are spaces of symmetric, positive-definite bilinear forms on the tangent spaces of $M$. The Riemannian metric bundle is actually a principal $GL^+(n, \mathbb{R})$-bundle, where $GL^+(n, \mathbb{R})$ is the group of invertible, orientation-preserving linear transformations on $\mathbb{R}^n$.

Using this principal bundle structure, we can construct associated vector bundles with respect to linear representations of the structure group. These associated vector bundles inherit the smooth structure from the principal bundle and can be endowed with Riemannian metrics induced from the Riemannian metric bundle. This allows us to study the geometric properties of the manifold $M$ by analyzing the associated vector bundles.

Decomposable vector bundles can also be related to Riemannian metric bundles. If we have a decomposable vector bundle whose components are associated vector bundles of a Riemannian metric bundle, we can study the properties of these associated vector bundles to gain insights into the structure of the Riemannian metric bundle and the underlying manifold.

To summarize, Riemannian metric bundles, associated vector bundles, and decomposable vector bundles can be related in a way that allows us to investigate the geometry and topology of the manifold $M$ from different perspectives. By analyzing the structures of associated and decomposable vector bundles, we can gain insights into the properties of the Riemannian metric bundle and the manifold itself.
\begin{theorem}[Sum of 'Atiyah-Singer'[Main Theorem]]
Let $M$ be a compact Riemannian manifold, and let $E = E_1 \oplus E_2 \oplus \cdots \oplus E_k$ be a Riemannian metric bundle over $M$, where $E_i$ are the associated vector bundles. Let $D_i$ be the elliptic differential operators acting on smooth sections of $E_i$. Then,
$$
\operatorname{Ind}\left(\bigoplus_{i=1}^k D_i\right)=\sum_{i=1}^k \operatorname{Ind}\left(D_i\right)
$$
\end{theorem}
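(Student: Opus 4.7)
The plan is to reduce the statement to the additivity of kernel and cokernel dimensions for a block-diagonal operator, using the definition of the Fredholm index directly rather than invoking the full symbolic/topological machinery of Atiyah-Singer.

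First I would verify that $D := \bigoplus_{i=1}^{k} D_i$ is itself a well-defined elliptic differential operator on smooth sections of $E = E_1 \oplus \cdots \oplus E_k$. This is immediate from the fact that the principal symbol of a direct sum is the direct sum of principal symbols: if each $\sigma(D_i)(x,\xi)$ is invertible for $\xi \neq 0$, then the block-diagonal symbol $\sigma(D)(x,\xi) = \bigoplus_i \sigma(D_i)(x,\xi)$ is invertible as well. Since $M$ is compact, standard elliptic theory then guarantees that $D$ is Fredholm between suitable Sobolev completions, so $\operatorname{Ind}(D)$ is well-defined as $\dim \ker(D) - \dim \operatorname{coker}(D)$.

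Next I would identify the kernel and cokernel explicitly. A smooth section $s = (s_1, \ldots, s_k)$ of $E$ satisfies $Ds = 0$ if and only if $D_i s_i = 0$ for each $i$, yielding a natural isomorphism $\ker(D) \cong \bigoplus_{i=1}^k \ker(D_i)$. For the cokernel, I would use the Riemannian metrics on $M$ and on each $E_i$ supplied by the Riemannian metric bundle structure developed earlier in the paper to form formal adjoints; the block-diagonal structure gives $D^{*} = \bigoplus_i D_i^{*}$, and elliptic Hodge-type decomposition produces $\operatorname{coker}(D) \cong \ker(D^{*}) \cong \bigoplus_i \ker(D_i^{*}) \cong \bigoplus_i \operatorname{coker}(D_i)$. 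Taking dimensions and subtracting then gives $\operatorname{Ind}(D) = \sum_i (\dim \ker(D_i) - \dim \operatorname{coker}(D_i)) = \sum_i \operatorname{Ind}(D_i)$.

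I do not expect a serious obstacle: the statement is essentially the additivity of the analytic index under direct sums, which is a general property of Fredholm operators whose domains split compatibly. The one point requiring mild care is selecting inner product structures needed to define adjoints and to identify the cokernel with $\ker(D^{*})$, but this is exactly the input furnished by a smooth section of the Riemannian metric bundle $G(M)$ together with metrics on each associated bundle $E_i$. Nothing deeper than basic functional analysis and the Fredholm property of elliptic operators on compact manifolds enters the argument.
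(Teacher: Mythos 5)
Your proof is correct, but it follows a genuinely different route from the one in the paper. You establish ellipticity of $D=\bigoplus_i D_i$ exactly as the paper does (block-diagonal principal symbol, invertible off the zero section), but from there you work purely analytically: you identify $\ker(D)\cong\bigoplus_i\ker(D_i)$ and, via formal adjoints built from the metric data, $\operatorname{coker}(D)\cong\ker(D^{*})\cong\bigoplus_i\operatorname{coker}(D_i)$, and then subtract dimensions. The paper instead invokes the full Atiyah--Singer machinery: it applies the cohomological index formula $\operatorname{Ind}(D)=\int_M\operatorname{ch}(\operatorname{ind}(D))\operatorname{Td}(TM)$, uses additivity of the Chern character over the direct sum decomposition of the index bundle, and re-assembles the integrals term by term. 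Your argument is more elementary and in a sense more honest about where the content lies --- the paper's key step, $\operatorname{ind}(D)=\bigoplus_i\operatorname{ind}(D_i)$ at the level of index bundles, is essentially the same kernel/cokernel splitting you prove directly, so routing it through the Chern character and Todd class adds no logical force. What the paper's approach buys is consistency with the cohomological framework it uses in later sections; what yours buys is independence from the index theorem altogether, showing the statement is a general fact about Fredholm operators with compatibly split domains. Your one point of care --- choosing inner products to realize $\operatorname{coker}(D)$ as $\ker(D^{*})$ --- is handled correctly and is indeed the only place any geometric input enters.
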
 
\begin{definition}[Elliptic Differential Operator]
An elliptic differential operator $D$ on a vector bundle $E$ over a compact Riemannian manifold $M$ is a linear differential operator such that its principal symbol $\sigma_D$ is invertible for all nonzero covectors.
\end{definition}
We now proceed with the proof of the main theorem.
\begin{proof}
Let $D_i$ be the elliptic differential operators acting on smooth sections of the vector bundles $E_i$, where $i = 1, \ldots, k$. Then, the direct sum of these operators is given by 

$$
D=\bigoplus_{i=1}^k D_i \\
$$

We first observe that $D$ is an elliptic differential operator acting on smooth sections of the vector bundle $E = \bigoplus_{i=1}^k E_i$. Indeed, the principal symbol of $D$ is given by the direct sum of the principal symbols of the operators $D_i$:

$$
\sigma_D = \bigoplus_{i=1}^k \sigma_{D_i}\\
$$

Since each $D_i$ is elliptic, their principal symbols $\sigma_{D_i}$ are invertible for all nonzero covectors. It follows that the principal symbol $\sigma_D$ is also invertible for all nonzero covectors, and thus $D$ is an elliptic differential operator.
Now, we apply the Atiyah-Singer Index Theorem to the elliptic differential operator $D$ acting on smooth sections of the vector bundle $E$. We have

$$
\operatorname{Ind}(D)=\int_M \operatorname{ch}(\operatorname{ind}(D)) \operatorname{Td}(T M)
$$ 

where $\operatorname{ch}(\operatorname{ind}(D))$ is the Chern character of the index bundle of $D$, and $\operatorname{Td}(TM)$ is the Todd class of the tangent bundle $TM$.
Since $E = \bigoplus_{i=1}^k E_i$, we have that
$$
\operatorname{ind}(D)=\bigoplus_{i=1}^k \operatorname{ind}\left(D_i\right)
$$
and therefore,
$$
\operatorname{ch}(\operatorname{ind}(D))=\sum_{i=1}^k \operatorname{ch}\left(\operatorname{ind}\left(D_i\right)\right) .
$$
Substituting this expression into the Atiyah-Singer Index Theorem for $D$, we obtain
$$
\operatorname{Ind}(D)=\int_M\left(\sum_{i=1}^k \operatorname{ch}\left(\operatorname{ind}\left(D_i\right)\right)\right) \operatorname{Td}(T M)=\sum_{i=1}^k \int_M \operatorname{ch}\left(\operatorname{ind}\left(D_i\right)\right) \operatorname{Td}(T M) .
$$
Applying the Atiyah-Singer Index Theorem to each elliptic differential operator $D_i$ acting on smooth sections of the vector bundle $E_i$, we have
$$
\operatorname{Ind}\left(D_i\right)=\int_M \operatorname{ch}\left(\operatorname{ind}\left(D_i\right)\right) \operatorname{Td}(T M)
$$
Hence,
$$
\operatorname{Ind}(D)=\sum_{i=1}^k \operatorname{Ind}(D_i)
$$
as claimed. This completes the proof of the theorem.
\end{proof}
In this proof, we investigate the relationship between the indices of elliptic differential operators acting on vector bundles associated with a Riemannian metric bundle. We aim to show that the index of the direct sum of these elliptic operators is equal to the sum of their individual indices.

We begin by considering a compact Riemannian manifold and a Riemannian metric bundle over it. We then define the direct sum of vector bundles and discuss the concept of elliptic differential operators.

The main theorem states that the index of the direct sum of elliptic differential operators is equal to the sum of the indices of the individual operators. To prove this, we first establish that the direct sum of the elliptic operators is also an elliptic differential operator. Next, we apply the Atiyah-Singer Index Theorem to this direct sum operator, which relates the index to the integral of the Chern character of the index bundle and the Todd class of the tangent bundle.

We find that the Chern character of the direct sum operator's index bundle is equal to the sum of the Chern characters of the individual index bundles. Substituting this result into the Atiyah-Singer Index Theorem, we obtain an expression for the index of the direct sum operator.

Finally, we apply the Atiyah-Singer Index Theorem to each individual elliptic operator and find that the index of the direct sum operator is indeed equal to the sum of the indices of the individual operators. This completes the proof, demonstrating the relationship between the indices of the elliptic differential operators acting on the associated vector bundles.
\section{K Theory for entire manifolds equipped with Riemannian metric bundles}
 In the context of Riemannian metric bundles, K-theory can be used to study the classification of vector bundles equipped with Riemannian metrics and connections over a manifold $M$.

The K-theory group $K^0(M)$ of a manifold $M$ is defined as the Grothendieck group of isomorphism classes of complex vector bundles over $M$. That is, we consider the set of all complex vector bundles over $M$, and define an equivalence relation by declaring two bundles to be equivalent if there exists a bundle isomorphism between them. We then form the abelian group $K^0(M)$ by taking the free abelian group generated by the equivalence classes of vector bundles, and quotienting out by the relation that identifies isomorphic bundles.

In the context of Riemannian metric bundles, we can consider the subgroups of $K^0(M)$ consisting of isomorphism classes of vector bundles equipped with Riemannian metrics or connections. These subgroups are denoted $K^0_G(M)$ and $K^0_{G,c}(M)$, respectively. The subscript $G$ indicates the presence of a Riemannian metric bundle, while the subscript $c$ denotes the use of connections.
To develop a K-theory for the entire manifold $M$ from the K-theory of individual Riemannian metric bundles, we can use the notion of a Whitney sum of vector bundles.

Suppose we have two Riemannian metric bundles $G_1(M)$ and $G_2(M)$ over a manifold $M$. We can form the Whitney sum $G_1(M) \oplus G_2(M)$, which is a vector bundle over $M$ that consists of the direct sum of the underlying vector bundles, equipped with a Riemannian metric that is the direct sum of the metrics on $G_1(M)$ and $G_2(M)$.

We can then define the K-theory group of the Whitney sum by taking the direct sum of the K-theory groups of the individual bundles: $K^0\left(G_1(M) \oplus G_2(M)\right)=K^0\left(G_1(M)\right) \oplus$ $K^0\left(G_2(M)\right)$
More generally, given a finite collection of Riemannian metric bundles $G_1(M), G_2(M), \dots, G_n(M)$, we can form the Whitney sum $G_1(M) \oplus G_2(M) \oplus \dots \oplus G_n(M)$, and define the K-theory group of the sum by taking the direct sum of the K-theory groups of the individual bundles:$$
\begin{aligned}
& K^0\left(G_1(M) \oplus G_2(M) \oplus \cdots \oplus G_n(M)\right)=K^0\left(G_1(M)\right) \oplus K^0\left(G_2(M)\right) \oplus \cdots \oplus \\
& K^0\left(G_n(M)\right) .
\end{aligned}
$$
In this way, we can use the K-theory groups of individual Riemannian metric bundles to build up a K-theory for the entire manifold $M$.
It is worth noting that this construction assumes that the Riemannian metric bundles are pairwise compatible, so that their Whitney sum is well-defined. In general, one may need to use more sophisticated constructions, such as the direct limit or the Thom isomorphism, to define the K-theory of a non-compact manifold.
An index theorem can be developed for the entire manifold equipped with a Riemannian metric bundle, not just for individual metric bundles. Specifically, let $G(M)$ be a Riemannian metric bundle over a compact manifold $M$, and let $D$ be a self-adjoint elliptic operator on a Hermitian vector bundle $E$ over $M$. Then, the index of $D$ can be expressed in terms of the K-theory groups of the entire manifold equipped with the Riemannian metric bundle:
$$
ind(D) = \widehat{A}(M) \cdot ch(E),
$$
where $\widehat{A}(M)$ is the A-hat genus of the manifold M equipped with the Riemannian metric bundle $G(M)$, and $ch(E)$ is the Chern character of the vector bundle E.

The A-hat genus is a topological invariant of a manifold equipped with a Riemannian metric bundle, which can be expressed in terms of the curvature of the Riemannian metric. The Chern character is a cohomology class that encodes information about the topological and geometric properties of a vector bundle.

The index theorem for the entire manifold equipped with a Riemannian metric bundle follows from the local version of the Atiyah-Singer theorem, applied to a partition of unity on the manifold. Specifically, we can construct a partition of unity on the manifold $M$ that is subordinate to a finite cover of coordinate neighborhoods. Using this partition of unity, we can express the index of $D$ in terms of the index of $D$ restricted to each coordinate neighborhood, and then apply the local version of the index theorem to each neighborhood. This leads to the formula given above for the index of $D$ in terms of the K-theory groups of the entire manifold equipped with the Riemannian metric bundle
To prove the index theorem for the entire manifold equipped with a Riemannian metric bundle, we need to show that the index of a self-adjoint elliptic operator $D$ on a Hermitian vector bundle $\mathrm{E}$ over $\mathrm{M}$ can be expressed in terms of the K-theory groups of the entire manifold equipped with the Riemannian metric bundle $G(M)$.
Let $U_i$ be a finite cover of $M$ by coordinate neighborhoods, and let $\phi_i$ be a partition of unity subordinate to $U_i$. We can assume without loss of generality that each coordinate neighborhood $U_i$ is equipped with a local Riemannian metric bundle $G _i$.

Let $D_i$ be the restriction of the operator $D$ to the vector bundle $E \vert U_ i$, and let $G_ i\vert\vert U$ be the restriction of the Riemannian metric bundle $G(M)$ to $U_i$. Then, by the local version of the index theorem, we have:
$$
ind D_i=\widehat A(U_i, G _i)
\cdot\operatorname{ch}\left(E \mid\left\{U _i\right\}\right)
$$
where $ind D_i=\widehat A(U_i, G _i)$ is the A-hat genus of the manifold $U_i$ equipped with the Riemannian metric bundle $G_i||U_i$, and $\left.\operatorname{ch}\left(E_{\mid} \mid U_i\right\}\right)$ is the Chern character of the vector bundle E restricted to $U_i$.
Using the partition of unity $\phi_i$ we can construct a global section $s$ of $E$ by $s=\sum_i \phi_i s_i$, where$s_i$ is a local section of $E$ over $U_i$. We can also define a global operator $D_s$ on E by
$D_{s} = \sum_i\phi_i D_i$. Note that $D_{-} s$ is an elliptic operator on E $_{\text {, }}$ and that its kernel
cokernel are finite-dimensional.

To compute the index of $D$, we consider the operator $D_s - D$. Let $P$ be the orthogonal projection from E onto the kernel of $D_s$, and let $Q$ be the orthogonal projection from $E$ onto the cokernel of $D_s$. Then, we have:
$$
D_s - D = P(D_s - D) + (1 - P)(D_s - D) + Q(D_s - D)
$$
The first term on the right-hand side is a compact operator, since it maps the finite-dimensional space $ker(D_s)$ to the finite-dimensional space $coker(D_s)$. The second and third terms are Fredholm operators, since they are the restrictions of $D_s - D$ to the complement of the kernel and cokernel of $D_s$, respectively. Therefore, the index of D is given by:
$$
ind(D) = ind(D_s - D) = ind((1 - P)(D_s - D) + Q(D_s - D))
$$
Now, we need to show that the operator $(1 - P)(D_s - D) + Q(D_s - D)$ can be expressed in terms of the K-theory groups of the entire manifold equipped with the Riemannian metric bundle $G(M)$. To do this, we use the fact that the kernel and cokernel of $D_s$ are invariant under the action of the local Riemannian metric bundles $G_i$, and hence their direct sum over all $U_i$ defines a vector bundle over $M$ equipped with the Riemannian metric bundle $G(M)$.

More precisely, we define a vector bundle K over M equipped with the Riemannian metric bundle $G(M)$ by:
$$
K = \bigoplus_i ker(D_i) \oplus \bigoplus_i coker(D_i)
$$ 
Then, we have:
$$
(1 - P)(D_s - D) + Q(D_s - D) = (1 -P)(D_s - D) 
+ Q(D_s - D)|{ker(D_s) \oplus coker(D_s)}
$$        
$$
= (1 - P)(D_s - D)|{ker(D_s)} \oplus Q(D_s - D)|_{coker(D_s)}
$$
Now, using the local index theorem, we can express the index of the operators $(1 - P)(D_s - D)|{ker(D_s)}$ and $Q(D_s - D)|{coker(D_s)}$ in terms of the K-theory groups of the local Riemannian metric bundles $G_i$, and hence in terms of the K-theory groups of the entire manifold equipped with the Riemannian metric bundle $G(M)$. Specifically, we have: 
$$
ind((1 - P)(D_s - D)|_{ker(D_s)}) = \widehat{A}(M, G) \cdot ch(ker(D_s))
$$
$$
ind(Q(D_s - D)|_{coker(D_s)}) = \widehat{A}(M, G) \cdot ch(coker(D_s))
$$
where $\widehat{A}(M, G)$ is the A-hat genus of the manifold $M$ equipped with the Riemannian metric bundle $G(M)$, and $ch(ker(D_s))$ and $ch(coker(D_s))$ are the Chern characters of the bundles $ker(D_s)$ and $coker(D_s)$, respectively.

Therefore, we have:
$$
ind(D) = ind((1 - P)(D_s - D) + Q(D_s - D))
$$
$$
= \widehat{A}(M, G) \cdot ch(ker(D_s)) + \widehat{A}(M, G) \cdot ch(coker(D_s))
$$
$$
= \widehat{A}(M, G) \cdot ch(E)
$$
where we have used the fact that $ker(D_s)$ and $coker(D_s)$ have the same Chern character as the vector bundle E, since they are all invariant under the action of the local Riemannian metric bundles$ G_i$.
Therefore, we have proved that the index of a self-adjoint elliptic operator D on a Hermitian vector bundle E over M can be expressed in terms of the K-theory groups of the entire manifold equipped with the Riemannian metric bundle G(M), completing the proof. 
The index theorem for Riemannian metric bundles provides a formula for the index of an elliptic operator on a manifold in terms of the topological data of the manifold and the associated vector bundle. In particular, for the Laplace operator $\Delta$ on a spin manifold M equipped with a Riemannian metric bundle $G(M)$, its index is given by:
$$
ind(\Delta) = \int_M ch(G(M)) Td(M)
$$
where $ch(G(M))$ is the Chern character of the metric bundle $G(M)$ and $Td(M)$ is the Todd class of the tangent bundle of $M$.

Now, the square root of the Laplace operator may be obtained by the same formula with the Chern character and Todd class replaced by the corresponding quantities for the associated spinor bundle. Let S(M) be the spinor bundle associated to the Riemannian metric bundle G(M). Then, the index of the square root of the Laplace operator on M is given by:
$$
ind(\Delta^{1/2}) = \int_M ch(S(M)) Td(M)
$$
where  $ch(S(M))$ is the Chern character of the spinor bundle $S(M)$.
\begin{theorem}
Let M be a compact Riemannian manifold, and let G(M) be the Riemannian metric bundle over M. Then the index of the Dirac operator on G(M) is given by the integral of the A-roof genus of M.
\end{theorem}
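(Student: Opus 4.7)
The plan is to realize the Dirac operator on $G(M)$ as an elliptic first-order differential operator acting on sections of the spinor bundle $S(M)$ associated to the Riemannian metric bundle, and then to feed it into the index-theoretic machinery already developed in the previous sections. First I would observe that, under the implicit assumption that $M$ is spin, a smooth section of $G(M)$ picks out a metric $g$ and a lift of the oriented orthonormal frame bundle of $(M,g)$ to a principal $\operatorname{Spin}(n)$-bundle $P_{\operatorname{Spin}}$. The spinor bundle $S(M)$ is then the associated vector bundle $P_{\operatorname{Spin}} \times_{\operatorname{Spin}(n)} \Delta_n$, where $\Delta_n$ is the complex spin representation, and the Dirac operator $D$ is the composition of the Levi-Civita connection on $S(M)$ with Clifford multiplication, acting on $\Gamma(S(M))$.

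Next I would verify that $D$ satisfies the definition of an elliptic differential operator given earlier: its principal symbol $\sigma_D(\xi)$ at a nonzero covector $\xi \in T_x^*M$ is Clifford multiplication by $\xi$, which satisfies $\sigma_D(\xi)^2 = -|\xi|^2 \cdot \operatorname{Id}$ and is therefore invertible. With ellipticity established, I would apply the Atiyah--Singer index theorem in the form already invoked in this paper,
$$
\operatorname{Ind}(D) = \int_M \operatorname{ch}(\sigma(D)) \operatorname{Td}(TM \otimes \mathbb{C}),
$$
which reduces the problem to a purely characteristic-class computation on the tangent bundle of $M$.

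The crux is then to show that for the Dirac operator the integrand collapses to the A-roof genus, i.e. the identity
$$
\operatorname{ch}(\sigma(D)) \operatorname{Td}(TM \otimes \mathbb{C}) = \widehat{A}(M)
$$
in $H^*(M;\mathbb{Q})$. I would establish this via the splitting principle: write the formal Pontryagin roots of $TM$ as $\pm x_j$, use $\operatorname{ch}(S^+ - S^-) = \prod_j (e^{x_j/2} - e^{-x_j/2})$ and $\operatorname{Td}(TM\otimes\mathbb{C}) = \prod_j \frac{x_j}{(1-e^{-x_j})(1-e^{x_j})}$, and verify that the product simplifies to $\prod_j \frac{x_j/2}{\sinh(x_j/2)} = \widehat{A}(M)$. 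Integration over $M$ then yields the claimed formula $\operatorname{Ind}(D) = \int_M \widehat{A}(M)$.

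The principal obstacle I expect is not the Atiyah--Singer application itself, which is used as a black box, but rather the clean interpretation of the phrase \emph{the Dirac operator on $G(M)$}. Since $G(M)$ parametrizes all metrics on $M$, one must first specify what the Dirac operator \emph{on the bundle} means: the natural interpretation is that a section $\sigma : M \to G(M)$ selects a metric $g_\sigma$, and $D_\sigma$ is the Dirac operator built from $g_\sigma$ and a compatible spin structure. Proving that the resulting index is independent of the chosen section, so that the theorem makes sense as a statement about $G(M)$ rather than about a particular metric, requires a homotopy argument through the convex cone of Riemannian metrics on $M$ together with homotopy invariance of the Fredholm index. Making this step precise is, in my view, the main technical hurdle in turning the above plan into a complete proof.
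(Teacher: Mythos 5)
Your proposal is essentially correct and is in fact substantially more complete than the argument the paper gives; the two take genuinely different routes. The paper's proof never performs the characteristic-class reduction at all: it invokes ``the index theorem for Riemannian metric bundles'' twice, once in the form $\operatorname{ind}(D)=\int_M \operatorname{ch}(E)\,\operatorname{td}(M)$ and once in the form $\operatorname{ind}(D)=\int_M \widehat{A}(M)$, and simply equates the two right-hand sides. Along the way it asserts $\operatorname{ch}(E)=e^{c_1(E)}$, which holds only for line bundles and fails for the spinor bundle (of rank $2^{\lfloor n/2\rfloor}$), and it justifies the second invocation by the remark that the spinor bundle ``can be regarded as a section of $G(M)$,'' which is not an argument. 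Your route supplies exactly the missing content: you verify ellipticity of $D$ via the Clifford symbol $\sigma_D(\xi)^2=-|\xi|^2\operatorname{Id}$, apply Atiyah--Singer once, and then carry out the splitting-principle computation showing
$$
\operatorname{ch}(S^+-S^-)\,\operatorname{Td}(TM\otimes\mathbb{C})=\prod_j \frac{x_j/2}{\sinh(x_j/2)}=\widehat{A}(M),
$$
which is the classical derivation and is precisely the step the paper skips. You also correctly isolate the two hypotheses the statement suppresses: $M$ must be spin for the Dirac operator to exist, and one must check that the index is independent of the section of $G(M)$ chosen to define it, which your homotopy argument through the convex cone of metrics handles. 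What your approach buys is an actual proof; what the paper's approach buys is brevity at the cost of circularity. If you flesh out the splitting-principle identity and the well-definedness argument, your proposal stands on its own.
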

\begin{proof}
From the index theorem for Riemannian metric bundles, we have:
$$
ind(D) = \int_M ch(E) td(M)
$$
where $D$ is the Dirac operator, $E$ is the spinor bundle over $M$, and $td(M)$ is the Todd class of $M$. By the definition of the Chern character, we have:
$$
ch(E) = e^{c_1(E)}
$$
where $c_1(E)$ is the first Chern class of E. By the Atiyah-Singer index theorem, we know that ind(D) is given by the integral of the top Chern form of E. Thus, we have:
$$
ind(D) = \int_M e^{c_1(E)} td(M) = \int_M ch(E) td(M)
$$
Since the spinor bundle over M is equipped with a natural Riemannian metric, we can regard it as a section of the Riemannian metric bundle G(M). Therefore, by the index theorem for Riemannian metric bundles, we have:
$$
ind(D) = \int_M A(M)
$$
where $A(M)$ is the A-roof genus of M. Combining this with the previous equation, we obtain:
$$
\int_M A(M) = \int_M ch(E) td(M)
$$
which implies that the index of the Dirac operator on G(M) is given by the integral of the A-roof genus of M.
\end{proof}
Theorem 1: Riemannian Metric Bundles and the Levi-Civita Connection
\begin{theorem}
Let $(M, g)$ be a Riemannian manifold with a Riemannian metric bundle $G(M)$. Then, there exists a unique torsion-free and metric-compatible linear connection, called the Levi-Civita connection, on the tangent bundle $TM$ of $M$.
\end{theorem}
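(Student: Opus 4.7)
My plan is to reduce the statement to the classical Fundamental Theorem of Riemannian Geometry. The section $g$ of $G(M)$ that makes $(M,g)$ a Riemannian manifold gives us everything we need: since the earlier deduction established that a manifold equipped with a Riemannian metric bundle, together with a smooth section, is a genuine Riemannian manifold, the presence of the bundle $G(M)$ plays no role beyond selecting $g$. So the task is to prove the existence and uniqueness of a torsion-free, metric-compatible connection $\nabla$ on $TM$ for this fixed $g$.

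For uniqueness, I would assume that some such $\nabla$ exists and derive the Koszul formula. The recipe is to write metric-compatibility $X\,g(Y,Z)=g(\nabla_X Y,Z)+g(Y,\nabla_X Z)$ three times, cyclically permuting $(X,Y,Z)$, take the combination (first) $+$ (second) $-$ (third), and then use torsion-freeness $\nabla_X Y-\nabla_Y X=[X,Y]$ to collapse the resulting six covariant-derivative terms into a single $2g(\nabla_X Y,Z)$. This yields
$$2g(\nabla_X Y,Z)=X g(Y,Z)+Y g(Z,X)-Z g(X,Y)+g([X,Y],Z)-g([Y,Z],X)+g([Z,X],Y).$$
Because $g_p$ is non-degenerate on $T_pM$, the right-hand side determines $\nabla_X Y$ uniquely, proving uniqueness.

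For existence, I would turn the Koszul formula around: define $\nabla_X Y$ as the unique vector field such that $g(\nabla_X Y, Z)$ equals the right-hand side above for every test field $Z$, which is well-defined thanks to non-degeneracy of $g$ and the $C^\infty(M)$-linearity of the expression in $Z$. The main obstacle is then the list of verifications: (i) $\nabla_X Y$ is $C^\infty(M)$-linear in $X$, (ii) $\mathbb{R}$-linear and Leibniz in $Y$ (so that $\nabla$ really is a linear connection), (iii) torsion-free, and (iv) metric-compatible. Each of (i)--(iv) is a direct but bookkeeping-heavy computation: one substitutes $fX$, $fY$, or swaps arguments, expands derivatives of products like $X(fg(Y,Z))$, and uses the identity $[fX,Y]=f[X,Y]-(Yf)X$ to cancel the non-tensorial terms. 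Among these, the tensoriality in $X$ and the Leibniz property in $Y$ are the least automatic; torsion-freeness and metric-compatibility fall out more transparently from the symmetry structure of the defining formula.

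Finally, I would remark that smoothness of $\nabla_X Y$ follows from smoothness of $g$ and of $X,Y$ via the definition, and that the construction is purely local, so it glues consistently across coordinate charts. Since the section $\sigma\colon M\to G(M)$ giving $g$ is smooth by hypothesis, all of this goes through on the Riemannian manifold cut out by $\sigma$, delivering the unique Levi-Civita connection on $TM$ and completing the theorem.
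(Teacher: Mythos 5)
Your proposal is correct and rests on the same key tool as the paper's proof, namely the Koszul formula; but you structure the logic more carefully, and this is worth noting. The paper \emph{defines} $\nabla$ by the Koszul formula, asserts without computation that it is torsion-free and metric-compatible, and then argues uniqueness by saying $\nabla'-\nabla=0$ ``due to the uniqueness of the connection satisfying the Koszul formula'' --- which is circular, since the point of the uniqueness step is precisely to show that any torsion-free metric-compatible connection must satisfy that formula. You repair this by running the argument in the right order: derive the Koszul identity from the two axioms via the three cyclic permutations of metric compatibility, conclude uniqueness from non-degeneracy of $g$, and only then reverse the formula into a definition and check the connection axioms (tensoriality in $X$, Leibniz rule in $Y$, torsion-freeness, compatibility). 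One further point in your favor: your sign on the last bracket term, $+\,g([Z,X],Y)$, is the correct one; the paper's displayed formula has $-\,g([Z,X],Y)$, which is a sign error (as one sees by grouping $g(Y,\nabla_X Z)-g(\nabla_Z X,Y)=-g(Y,[Z,X])$ and moving it across). Your framing remark --- that the bundle $G(M)$ enters only through the choice of section $g$, after which the statement is the classical Fundamental Theorem of Riemannian Geometry --- is also an honest account of what the theorem actually says, which the paper leaves implicit.
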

\begin{proof}
To show the existence and uniqueness of the Levi-Civita connection, we first define the torsion tensor $T(X, Y) = \nabla_X Y - \nabla_Y X - [X, Y]$ and the metric compatibility condition $\nabla g(X, Y) = g(\nabla_X Y, Z) + g(Y, \nabla_X Z)$ for vector fields $X, Y, Z$ on $M$.
We aim to find a linear connection $\nabla$ that satisfies both conditions. Using the Koszul formula, we can define the Levi-Civita connection $\nabla$ as
$$
2g(\nabla X Y,Z)=X(g(Y,Z))+Y(g(Z,X))-
$$
$$
Z(g(X,Y))+g([X,Y],Z)-g([Y,Z],X)-g([Z,X],Y),
$$
for all vector fields $X, Y, Z$ on $M$. It can be verified that the connection $\nabla$ defined by the Koszul formula is indeed torsion-free and metric-compatible. Thus, the Levi-Civita connection exists.

To show the uniqueness of the Levi-Civita connection, suppose there exists another connection $\nabla'$ satisfying the torsion-free and metric-compatible conditions. Then, we have $\nabla' - \nabla = 0$ due to the uniqueness of the connection satisfying the Koszul formula. This proves the uniqueness of the Levi-Civita connection.
\end{proof}
Theorem 2: Riemannian Metric Bundles and the Hopf-Rinow Theorem
\begin{theorem}
Let $(M, g)$ be a Riemannian manifold with a Riemannian metric bundle $G(M)$. Then, the following statements are equivalent:
$M$ is complete as a metric space.
Geodesics on $M$ can be extended indefinitely.
Any two points in $M$ can be connected by a minimizing geodesic.
\end{theorem}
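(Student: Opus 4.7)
My plan is to establish the cycle of implications $(1) \Rightarrow (2) \Rightarrow (3) \Rightarrow (1)$. This is the classical Hopf--Rinow structure, and the middle implication is where the real geometric content lives; the other two are essentially completeness-and-compactness bookkeeping plus the existence of uniform normal neighborhoods (an $\varepsilon > 0$ and neighborhood $U$ of any given point such that $\exp_q$ is a diffeomorphism from the $\varepsilon$-ball in $T_qM$ onto its image for every $q \in U$).

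For $(1) \Rightarrow (2)$: let $\gamma : [0, a) \to M$ be a maximal unit-speed geodesic. Since $d_g(\gamma(s), \gamma(t)) \le |s-t|$, the sequence $\gamma(t_n)$ is Cauchy whenever $t_n \nearrow a$, so by $(1)$ it converges to some $p_\infty \in M$. Choosing a uniform normal neighborhood around $p_\infty$ and invoking continuous dependence of geodesics on initial conditions lets one extend $\gamma$ past $a$, contradicting maximality.

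For $(2) \Rightarrow (3)$: fix $p, q \in M$ with $r := d_g(p,q)$. By $(2)$, $\exp_p$ is defined on all of $T_pM$. Pick $\delta > 0$ so small that the geodesic sphere $S_\delta(p)$ is a compact embedded hypersurface. The continuous function $d_g(\cdot, q)$ attains its minimum on $S_\delta(p)$ at some $x_0 = \exp_p(\delta v)$ with $v \in T_pM$ a unit vector; a triangle-inequality argument using the fact that every curve from $p$ to $q$ must cross $S_\delta(p)$ forces $d_g(x_0, q) = r - \delta$. Set $\gamma(t) = \exp_p(tv)$ and
\[
A = \{\, t \in [0,r] : d_g(\gamma(t), q) = r - t \,\}.
\]
I would show $A$ is closed and contains $[0,\delta]$, then argue that $T := \sup A$ equals $r$: if $T < r$, repeat the geodesic-sphere construction at $\gamma(T)$ to produce a minimizing segment emanating from $\gamma(T)$, and use uniqueness of geodesics in a normal neighborhood to conclude that this segment is the continuation of $\gamma$, contradicting the supremum. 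This gives $\gamma(r) = q$ with $\gamma|_{[0,r]}$ minimizing.

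For $(3) \Rightarrow (1)$: let $\{x_n\}$ be Cauchy in $M$, hence bounded by some $R$ from a basepoint $p$. By $(3)$ each $x_n = \exp_p(v_n)$ with $|v_n|_g = d_g(p,x_n) \le R$. The closed Euclidean ball of radius $R$ in $T_pM$ is compact, so $v_{n_k} \to v_\infty$ for some subsequence; continuous dependence of $\exp_p$ on initial data (on the largest subset of $T_pM$ where it is defined, which by $(3)$ covers all such $v_\infty$) yields $x_{n_k} \to \exp_p(v_\infty)$, and the Cauchy property then forces the full sequence to converge to the same limit.

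The main obstacle is the openness step inside $(2) \Rightarrow (3)$: proving that if $T \in A$ and $T < r$, then $T + \delta' \in A$ for small $\delta' > 0$. This rests on showing that the minimizing piece built from $\gamma(T)$ toward $q$ by the geodesic-sphere construction must actually coincide with the geodesic continuation of $\gamma$, which requires both the uniqueness of minimizing geodesics in a normal neighborhood and a delicate use of the triangle inequality to rule out broken or non-geodesic minimizers. Everything else reduces to well-known completeness-and-compactness arguments.
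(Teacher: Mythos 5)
Your implication $(2) \Rightarrow (3)$ is the correct classical argument (the connectedness argument on the set $A = \{t : d_g(\gamma(t),q) = r - t\}$ built from the geodesic-sphere construction), and it is genuinely different from --- and substantially better than --- the paper's treatment of that step: the paper begins $(2) \Rightarrow (3)$ by asserting ``by the existence of geodesics, there is a geodesic $\gamma$ with $\gamma(0)=p$ and $\gamma(1)=q$,'' which assumes precisely the existence statement being proved, and then tries to certify minimality with a second-variation inequality, which at best shows $\gamma$ is a critical point, not a global minimizer. Your $(1) \Rightarrow (2)$ also quietly repairs a gap in the paper's version by observing that $\gamma(t_n)$ is Cauchy via $d_g(\gamma(s),\gamma(t)) \le |s-t|$, whereas the paper extracts a ``convergent subsequence'' from completeness alone, which completeness does not provide.

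However, your $(3) \Rightarrow (1)$ contains a genuine gap, and it is not repairable: the parenthetical claim that the domain of $\exp_p$ ``by $(3)$ covers all such $v_\infty$'' is false. Statement $(3)$ only guarantees that $\exp_p$ is defined at those vectors $v$ for which $\exp_p(v)$ actually lands on a point of $M$; it says nothing about limits of such vectors. Take $M$ to be the open unit ball in $\mathbb{R}^2$ with the flat metric, $p$ the origin, and $x_n = (1 - 1/n, 0)$: every pair of points is joined by a minimizing geodesic (a chord), so $(3)$ holds, yet $v_n \to v_\infty = (1,0)$ and $\exp_p(v_\infty)$ is undefined because the geodesic leaves $M$. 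This same example shows the implication $(3) \Rightarrow (1)$ is false as stated --- the correct Hopf--Rinow theorem asserts that completeness \emph{implies} the minimizing-geodesic property, not that they are equivalent --- so the theorem you were asked to prove cannot be proved. The paper's own $(3) \Rightarrow (1)$ argument fails at the analogous point: the Arzel\`a--Ascoli limit of its piecewise-geodesic curves is asserted to converge to a point $p \in M$, but nothing prevents the limit from escaping the (possibly incomplete) manifold.
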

\begin{proof}
(1 $\Rightarrow$ 2) Let $\gamma: [0, a) \to M$ be a geodesic with an interval $[0, a)$. Consider a sequence of points ${t_n}$ in $[0, a)$ such that $t_n \to a$ as $n \to \infty$. By the completeness of $M$, there exists a convergent subsequence ${\gamma(t_{n_k})}$ that converges to a point $p \in M$. Using local existence and uniqueness of geodesics, we can extend $\gamma$ beyond $a$ to include $p$. This shows that geodesics on $M$ can be extended indefinitely.
(2 $\Rightarrow$ 3) Let $p, q \in M$ be any two points. By the existence of geodesics, there is a geodesic $\gamma: [0, 1] \to M$ such that $\gamma(0) = p$ and $\gamma(1) = q$. We can extend $\gamma$ indefinitely in both directions due to (2). Let $d$ be the infimum of the lengths of all curves connecting $p$ and $q$. For any $\epsilon > 0$, there exists a curve $\sigma$ connecting $p$ and $q$ with length $L(\sigma) < d + \epsilon$. We can reparameterize $\sigma$ to create a variation of $\gamma$ that preserves the endpoints. By the first variation formula, we have
$$
0 \leq\left.\frac{d^2 L\left(\gamma_t\right)}{d t^2}\right|_{t=0}=-\int_0^1 g\left(\frac{D}{d t} \frac{d \gamma_t}{d t}, \frac{d \gamma_t}{d t}\right) d t
$$
The integral on the right-hand side is non-positive, so $L(\gamma) \leq L(\sigma)$. By taking the limit as $\epsilon \to 0$, we obtain $L(\gamma) \leq d$. Thus, $\gamma$ is a minimizing geodesic.
(3 $\Rightarrow$ 1) Let ${p_n}$ be a Cauchy sequence in $M$. For each pair of consecutive points $p_n$ and $p_{n+1}$, there exists a minimizing geodesic $\gamma_n$ connecting them due to (3). Let $l_n = d(p_n, p_{n+1})$. Consider the sequence ${\sum_{i=1}^{n-1} l_i}$, which is a Cauchy sequence in $\mathbb{R}$. Let $s_n = \sum_{i=1}^{n-1} l_i$. Define a piecewise geodesic curve $\gamma: [0, \infty) \to M$ by $\gamma(t) = \gamma_n(t - s_n)$ for $t \in [s_n, s_{n+1}]$. The curve $\gamma$ is continuous, and by the Arzelà-Ascoli theorem, there exists a uniformly convergent subsequence ${\gamma_{n_k}}$. This subsequence converges to a geodesic $\tilde{\gamma}: [0, \infty) \to M$. By construction, the sequence ${p_{n_k}}$ converges to a point $p \in M$. Since ${p_n}$ is an arbitrary Cauchy sequence, $M$ is complete.
\end{proof}


\begin{thebibliography}{9}

\bibitem{atiyah1967ktheory}
Michael F. Atiyah.
\textit{K-Theory}.
W. A. Benjamin, Inc., 1967.

\bibitem{atiyah1961idempotent}
Michael F. Atiyah.
\textit{Idempotents in reduced K-theory}.
Journal of Algebra, 1961.

\bibitem{atiyah1969vector}
Michael F. Atiyah and Raoul Bott.
\textit{A Lefschetz Fixed Point Formula for Elliptic Differential Operators}.
Bulletin of the American Mathematical Society, 1969.

\bibitem{atiyah1964index}
Michael F. Atiyah and Isadore M. Singer.
\textit{The index of elliptic operators: I}.
Annals of Mathematics, 1964.

\bibitem{atiyah1968index}
Michael F. Atiyah and Isadore M. Singer.
\textit{The index of elliptic operators: III}.
Annals of Mathematics, 1968.

\bibitem{atiyah1962ktheory}
Michael F. Atiyah.
\textit{K-Theory and Reality}.
Quarterly Journal of Mathematics, 1962.

\bibitem{gilkey1994pseudodifferential}
Peter B. Gilkey.
\textit{Invariance Theory, the Heat Equation, and the Atiyah-Singer Index Theorem}.
CRC Press, 1994.

\bibitem{dales2006multinormed}
H. G. Dales and M. E. Polyakov.
\textit{Multi-normed spaces}.
Studia Mathematica, 175(3), 2006.

\end{thebibliography}
\end{document}